\newtheorem{theorem}{Theorem}[section]
\newtheorem{corollary}[theorem]{Corollary}
\newtheorem{proposition}[theorem]{Proposition}
\begin{document}

\title{Non left-orderable surgeries and generalized Baumslag-Solitar relators}

\author{Kazuhiro Ichihara}
\address{Department of Mathematics, 
College of Humanities and Sciences, Nihon University,
3-25-40 Sakurajosui, Setagaya-ku, Tokyo 156-8550, Japan}
\email{ichihara@math.chs.nihon-u.ac.jp}

\author{Yuki Temma}
\address{Graduate School of Integrated Basic Sciences, Nihon University,
3-25-40 Sakurajosui, Setagaya-ku, Tokyo 156-8550, Japan}
\email{s6113M12@math.chs.nihon-u.ac.jp}

\begin{abstract}
We show that a knot has a non left-orderable surgery if the knot group admits a generalized Baumslag-Solitar relator and satisfies certain conditions on a longitude of the knot.
%% Added in 2014.9.12
As an application, it is shown that certain positively twisted torus knots admit non left-orderable surgeries. 
\end{abstract}

%% Added in 2014.9.12 (twisted torus knot)
\keywords{Dehn surgery, left-orderable group, Baumslag-Solitar relator, twisted torus knots}

\subjclass[2010]{Primary 57M50; Secondary 57M25}

\date{\today}

\maketitle

\section{Introduction}

In \cite[Conjecture 1]{BGW}, Boyer, Gordon and Watson proposed the following conjecture, 
now called the \textit{L-space Conjecture}: 
An irreducible rational homology 3-sphere is an L-space if and only if its fundamental group is non left-orderable.
This has now become one of the important open problems in knot theory and Dehn surgery theory. 
Here, a rational homology sphere is called an \textit{L-space} if the rank of the Heegaard Floer homology is equal to the order of the first (classical) homology group for the manifold. See \cite{PZ}.  
Also a non-trivial group $G$ is called \textit{left-orderable}, often abbreviated as LO, if there exists a strict total ordering $>$ for the elements of $G$ that is left-invariant: For any $f, g, h \in G$, whenever $g>h$ then $fg>fh$. 
We use the convention that the trivial group is not considered to be left-orderable.%, and that the 3-sphere $S^3$ is an L-space.
% Removed: 2014.9.12

On the study of the L-space Conjecture, 
one of the simple ways to construct L-spaces is given by Dehn surgery. 
For instance, any given knot in the 3-sphere $S^3$ admitting an L-space by surgery gives rise to an infinite family of L-spaces.
In fact, it was shown in \cite[Proposition 9.6]{OS} that if a knot $K$ has one L-space surgery, then all the surgered manifolds are L-spaces if the surgery slopes are greater than or equal to $2g(K)-1$, where $g(K)$ denotes the genus of the knot. 
Precisely, for a given knot $K$ in a 3-manifold $M$, 
the following operation is called \textit{Dehn surgery}; 
removing an open regular neighborhood of $K$ from $M$, and gluing a solid torus back. 
Let us denote by $K(p/q)$ the 3-manifold obtained by Dehn surgery on $K$ in $S^3$ 
along the slope $r$ (i.e., the meridian of the attached solid torus is identified with the curve of the slope $r$), which corresponding to $p/q \in \mathbb{Q}$ on the peripheral torus of $K$. 
Please refer \cite{B} on details, for instance. 

We say that a Dehn surgery on a knot is \textit{non left-orderable} 
if it yields a closed 3-manifold with the non left-orderable fundamental group. 
There are several known studies on 3-manifolds with a non left-orderable fundamental group by using Dehn surgery. 
For example, there are works by Clay and Watson \cite{CW} and Nakae \cite{N}. 
In this paper, we show the following, which gives an extension of a result of Nakae.

\begin{theorem}\label{thm}
Let $K$ be a knot in a closed, connected 3-manifold $M$. 
Suppose that the knot group $\pi_1(M-K)$ has a presentation such as 
\[
\langle a, b \ \vert \ (w_1 a^m w_1^{-1}) b^{-r} (w_2^{-1} a^n w_2) b^{r-k} \rangle
\]
Here $w_1, w_2$ are arbitrary words % and $w_i^{-1}$ denotes the word which satisfies $w_i w_i^{-1}=1$ for $i=1,2$ 
with $m, n\ge 0$, $r\in \mathbb{Z}$, $k\ge 0$. 
Suppose further that $a$ represents a meridian of $K$ and 
$a^{-s} w a^{-t}$ represents a longitude of $K$ with $s, t\in \mathbb{Z}$ and $w$ is a word which excludes $a^{-1}$ and $b^{-1}$.
Then if $q\ne 0$ and $p/q\ge s+t$, then Dehn surgery on $K$ along the slope $p/q$ yields a closed 3-manifold with non left-orderable fundamental group.
\end{theorem}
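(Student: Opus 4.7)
The plan is to argue by contradiction: suppose $G := \pi_1(K(p/q))$ is left-orderable, with positive cone $P$, and derive a contradiction from the two relations defining $G$, namely the Baumslag--Solitar relator
\[
R_1 := (w_1 a^m w_1^{-1})\, b^{-r}\,(w_2^{-1} a^n w_2)\, b^{r-k}
\]
and the Dehn surgery relator $R_2 := a^p \lambda^q$, where $\lambda = a^{-s} w a^{-t}$.

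I would first handle the degenerate cases in which $a$ or $b$ is trivial in $G$. If $a = 1$, the BS relator reduces to $b^{-k} = 1$ and the surgery relation reduces to a power of $b$ equal to $1$; torsion-freeness of left-orderable groups then forces $b = 1$ (after inspecting small exponent possibilities), making $G$ trivial, contrary to the convention that the trivial group is not left-orderable. A symmetric analysis handles $b = 1$ via $R_1$.

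So we may assume $a, b \neq 1$ in $G$. Passing to the opposite left-order if necessary (which flips every sign), assume further that $a > 1$. The argument then splits into four sub-cases according to the signs of $b$ and $q$. In the principal case $b > 1$, $q > 0$, the word $w$ is a positive monoid word in $\{a, b\}$, so $w \in P$; combining the identity $\lambda^q = a^{-p}$ with the hypothesis $p/q \geq s+t$ and applying left-invariance factor-by-factor to $\lambda = a^{-s} w a^{-t}$ pins down the sign of $\lambda$. Through $R_1 = 1$ this in turn forces definite signs on the conjugate blocks $w_1 a^m w_1^{-1}$ and $w_2^{-1} a^n w_2$, so that $R_1$ becomes a product of elements of uniform sign, which cannot equal $1$. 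The remaining three sub-cases ($b < 1$ or $q < 0$) are handled by parallel sign-flipped arguments, using the freedom in the exponents $r$ and $r-k$ of $b$ appearing in $R_1$.

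The main obstacle is the familiar asymmetry of left-orderability as opposed to bi-orderability: conjugation need not preserve the positive cone, so $w_1 a^m w_1^{-1} > 1$ cannot be deduced directly from $a > 1$. The crux is to use the interaction of the surgery inequality $p/q \geq s + t$ with the ``positive'' structure of $w$ inside the longitude $a^{-s} w a^{-t}$ (which excludes $a^{-1}$ and $b^{-1}$) to compensate for this and to control the signs of the conjugate subwords appearing in $R_1$; I expect this is precisely where the threshold $s + t$ enters in an essential way.
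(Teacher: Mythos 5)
You correctly identify the central obstruction --- that in a left-order $a>1$ does not imply $w_1 a^m w_1^{-1}>1$ --- but your proposal does not actually overcome it, and the mechanism you suggest cannot work. The surgery relation $a^p\lambda^q=1$ together with $p/q\ge s+t$ constrains only the element $\lambda=a^{-s}wa^{-t}$; it says nothing about the arbitrary words $w_1,w_2$ appearing in the conjugate blocks of $R_1$, so there is no way to ``force definite signs'' on $w_1a^mw_1^{-1}$ and $w_2^{-1}a^nw_2$ from it. Even the preliminary step of pinning down the sign of $\lambda$ fails in a bare left-order: from $w>1$ you get $a^{-s}w>a^{-s}$, but you cannot then multiply on the right by $a^{-t}$, so the sign of $a^{-s}wa^{-t}$ is not determined by the signs of $a$ and $w$. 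Likewise the claim that $R_1$ becomes a product of elements of uniform sign is not right even granting signs for the conjugate blocks, since the two $b$-exponents are $-r$ and $r-k$, which generically have opposite signs.

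The paper resolves the conjugation problem by not working with an abstract positive cone at all: it realizes the (countable, assumed left-orderable) group as a subgroup of $Homeo^+(\mathbb{R})$ acting without global fixed point (via Boyer--Rolfsen--Wiest), and exploits the dichotomy that an orientation-preserving homeomorphism of $\mathbb{R}$ either has a fixed point or satisfies $ax>x$ for \emph{every} $x$. The latter property, unlike $a>1$ in a left-order, is conjugation-invariant (since $gag^{-1}(gx)=g(ax)>gx$), which is exactly what allows a pointwise comparison of the two sides of the rearranged relator $a^nw_2b^{r-k}w_1a^m=w_2b^rw_1$ to conclude $bx>x$ for all $x$; the hypothesis $p/q\ge s+t$ then enters only at the final step, against the identity $a^{q((s+t)q-p)}=w^{q^2}$ derived from the surgery relator. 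To salvage your outline you would essentially have to import this dynamical realization; the purely order-theoretic sign bookkeeping you propose does not close.
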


There exist knots in $S^3$ whose knot groups admit presentations satisfying the conditions in our theorem. 
For instance, as shown by Nakae in \cite[Proposition 2.1]{N}, 
the $(-2,3, 2s+1)$-pretzel knot with $s \ge 3$ gives such examples. 
Thus our theorem presents an alternative proof of the result \cite[Corollary 4.2]{N}. 
%% Modified in 2014.9.12
%He showed in \cite[Corollary 4.2]{N} that if $q>0$, $p/q \ge 4s+7$ and $p$ is odd, then the $p/q$-surgery on a $(-2, 3, 2s + 1)$-pretzel knot with $ s \ge 3$ is non left-orderable.
%We remark that a stronger version of the Nakae's result was obtained by Clay and Watson in \cite[Theorem 4.5]{CW}. 
%but our theorem seems not applicable to establish it. 

\medskip

%% Added in 2014.9.12
Furthermore, in Proposition \ref{prp}, we see that 
certain positively twisted torus knots have the knot groups admitting the presentations satisfying the conditions in our theorem. 
Therefore, as an application to Theorem \ref{thm}, we show that certain positively twisted torus knots admit non left-orderable surgeries as follows. 

\begin{corollary}\label{cor}
Let $K$ be a positively $u$-twisted $(3, 3v + 2)$-torus knot with $u, v \ge 0$. 
Then if $q\ne 0$ and $p/q\ge 2 u + 3(3v+2)$, then Dehn surgery on $K$ along the slope $p/q$ yields a closed 3-manifold with non left-orderable fundamental group.
\end{corollary}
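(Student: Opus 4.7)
The plan is to obtain Corollary~\ref{cor} as an immediate consequence of Theorem~\ref{thm}, once the hypotheses of that theorem are verified for a positively $u$-twisted $(3, 3v+2)$-torus knot $K$. The bulk of the verification is packaged into Proposition~\ref{prp}, which (as announced in the introduction) supplies a presentation of $\pi_1(S^3\setminus K)$ of the precise form
\[
\langle a, b \ \vert \ (w_1 a^m w_1^{-1}) b^{-r} (w_2^{-1} a^n w_2) b^{r-k} \rangle,
\]
with $m, n \ge 0$, $r\in\mathbb{Z}$, $k\ge 0$, and in which $a$ represents a meridian of $K$.

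The next step is to identify a preferred longitude in this presentation and put it in the form $a^{-s} w a^{-t}$ required by Theorem~\ref{thm}. For a torus knot $T(3, 3v+2)$ the preferred (Seifert) longitude differs from the natural cabling framing on the Heegaard torus by $3(3v+2)$ meridians, and introducing $u$ positive full twists on two of the three strands adds an additional $2u$ meridians to the framing. Consequently, reading Proposition~\ref{prp}'s longitude word and collecting all negatively-exponented meridian powers to the two ends, one should obtain an expression of the form $a^{-s} w a^{-t}$ with $w$ a word free of $a^{-1}$ and $b^{-1}$, and with $s + t = 2u + 3(3v+2)$.

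With this in place the corollary is immediate: for any $p/q$ with $q\ne 0$ and $p/q \ge 2u + 3(3v+2) = s+t$, Theorem~\ref{thm} applies to the presentation and yields that $\pi_1(K(p/q))$ is not left-orderable.

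Accordingly, the main work hides in Proposition~\ref{prp}, and there are two places where care is required. The first is algebraic: one must exhibit the generalized Baumslag--Solitar relator in the twisted torus knot group, which amounts to expressing one of the standard torus-knot relations (encoding the two exceptional Seifert fibers meeting in a regular fiber, itself modified by the $u$ twists) as a conjugate of $a^m$ times a power of $b$ times a conjugate of $a^n$ times a power of $b$. The second, which I expect to be the main obstacle, is geometric/combinatorial: one must compute the longitude in terms of this presentation's generators and check that, after rewriting, it genuinely has the form $a^{-s} w a^{-t}$ with $w$ positive in $a, b$ and with the exponent sum $s+t$ matching the framing correction $2u + 3(3v+2)$. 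Both are bookkeeping computations on a Wirtinger-type presentation of $K$, but the positivity condition on $w$ is the constraint that must be handled carefully.
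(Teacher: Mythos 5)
Your proposal is correct and matches the paper's route exactly: the paper derives Corollary~\ref{cor} immediately from Proposition~\ref{prp} (which supplies the generalized Baumslag--Solitar presentation with $a$ a meridian and the preferred longitude in the form $a^{-s}wa^{-t}$ with $s=2u+3(3v+2)+1$, $t=-1$, hence $s+t=2u+3(3v+2)$) together with Theorem~\ref{thm}. Your identification of the framing correction $2u+3(3v+2)$ and of the positivity of $w$ as the delicate points is precisely where the paper's computation in Proposition~\ref{prp} does its work.
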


This gives an extension to the results obtained by Clay and Watson \cite[Theorems 4.5 and 4.7]{CW}. 
There they gave proofs for the cases (1) $u \ge 0$ and $v=1$, (2) $u=1$ and $v \ge0$. 
Also note that Vafaee showed in \cite{V} that such twisted torus knots admit L-space surgery. 
Thus the corollary also gives a supporting evidence for the L-space conjecture. 

\medskip

We here remark that the relator in the presentation of $\pi_1(M-K)$ above can be regarded as a generalization of the well-known Baumslag-Solitar relator. 
By the \textit{Baumslag-Solitar relator}, we mean the relator $x^{-n} y x^m y^{-1}$ with non-zero integers $m$ and $n$ in the group generated by two elements $x$ and $y$. 
The groups with two generators and the Baumslag-Solitar relator was originally introduced in \cite{BS}, now called a \textit{Baumslag-Solitar group}, which plays an important role and is well-studied in combinatorial group theory and geometric group theory. 

In particular, in \cite{S}, it was shown that the Baumslag-Solitar relator cannot appear in a non-degenerate way in the fundamental group of an orientable 3-manifold. 
Our relator is obtained from the Baumslag-Solitar relator by replacing $x$ with some conjugates of it, and so it can be regarded as a generalization of the relator. 
It thus seems interesting that our relator can actually appear in the knot groups for certain knots in $S^3$, or more precisely, for the $(-2,3,n)$-pretzel knots, and can play an essential role to study (non) left-orderability of the groups.

\section{Proof}

The following is the key proposition to prove the main theorem. 

A homomorphism $\Phi$ of a group $G$ to $Homeo^+(\mathbb{R})$, i.e., the group of order-preserving homeomorphisms on $\mathbb{R}$, has a \textit{global fixed point} if there exists $x$ in $\mathbb{R}$ such that $\Phi(g) x=x$ for all $g$ in $G$. 
In the following, we will confuse, by abuse of notation $g$ and $\Phi(g)$ for an element $g$ in $G$.

\begin{proposition}
Suppose that a group $G$ has a presentation such as 
\[
\langle a, b \ \vert \ ( w_1a^m w_1^{-1} ) b^{-r} ( w_2^{-1} a^n w_2 ) b^{r-k}, MLM^{-1}L^{-1}, M^pL^q \rangle
\]
Here $w_1, w_2$ are arbitrary words with $m, n\ge 0$, $r\in \mathbb{Z}$, $k\ge 0$, $p, q\in \mathbb{Z}$, $M=a$, $L=a^{-s} w a^{-t}$, $w$ is a word which contains at least one $b$ and excludes $a^{-1}$ and $b^{-1}$, $s, t\in \mathbb{Z}$.
If $q\ne 0$ and $p/q\ge s+t$, then every homomorphism $\Phi: G \to Homeo^+(\mathbb{R})$ 
has a global fixed point.
\end{proposition}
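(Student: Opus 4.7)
The plan is to argue by contradiction. Assume $\Phi\colon G \to \mathrm{Homeo}^+(\mathbb{R})$ has no global fixed point; I will derive a point fixed by both $a$ and $b$. I abuse notation and write $a, b, M, L$ for the images, and rely on two standard facts: if $f \in \mathrm{Homeo}^+(\mathbb{R})$ has no fixed point then either $f(x) > x$ for all $x$ or $f(x) < x$ for all $x$; and if $f, g \in \mathrm{Homeo}^+(\mathbb{R})$ commute then $g$ preserves $\mathrm{Fix}(f)$, with $g|_{\mathrm{Fix}(f)}$ an order-preserving bijection.

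The first stage is to show $\mathcal{F} := \mathrm{Fix}(a)$ is nonempty. Suppose not; without loss of generality, $a(x) > x$ for all $x$. The commutation $[M, L] = 1$ together with the third relator, which gives $L^q = a^{-p}$, constrains $L$ to have very rigid dynamics (in particular $L$ is fixed-point-free whenever $p \neq 0$). The hypothesis that $w$ excludes $a^{-1}$ and $b^{-1}$ and contains at least one $b$ yields, by a short case analysis on the sign behaviour of $b$ (whether $b$ has a fixed point and, if not, the sign of $b - \mathrm{id}$), a one-sided inequality for the displacement of $w$. Substituting $L = a^{-s} w a^{-t}$ into $L^q = a^{-p}$ and comparing with this inequality, the hypothesis $p/q \geq s + t$ produces a contradiction in every sub-case. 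Hence $\mathcal{F} \neq \emptyset$; then $L(\mathcal{F}) = \mathcal{F}$ by commutation, and $L^q|_{\mathcal{F}} = a^{-p}|_{\mathcal{F}} = \mathrm{id}_{\mathcal{F}}$. Since an order-preserving self-bijection $\phi$ of any subset of $\mathbb{R}$ with $\phi^q = \mathrm{id}$ must itself be the identity, $L$ fixes every point of $\mathcal{F}$.

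The remaining stage is to upgrade a common fixed point of $a$ and $L$ to a common fixed point of $a$ and $b$. For any $x_0 \in \mathcal{F}$, combining $a(x_0) = x_0$ with $L(x_0) = x_0$ yields $w(x_0) = x_0$. I then plan to use the first relator
\[
(w_1 a^m w_1^{-1}) b^{-r} (w_2^{-1} a^n w_2) b^{r-k} = \mathrm{id},
\]
evaluated at an extremal $x_0 \in \mathcal{F}$ (for instance an endpoint of a connected component of $\mathcal{F}$, or an accumulation point of an orbit), to force $b(x_0) = x_0$, contradicting the assumption of no global fixed point. The main obstacle I anticipate is precisely this last implication: from the bare equation $w(x_0) = x_0$ one cannot deduce $b(x_0) = x_0$, since distinct factors of $w$ may cancel on a given orbit. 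The choice of $x_0$ and a careful sign analysis along the partial products of the first relator are what make this work, and it is here that the positivity of $m, n, k$, the exclusion of $a^{-1}$ and $b^{-1}$ in $w$, and the inequality $p/q \geq s + t$ must all be used in concert.
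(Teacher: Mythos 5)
Your skeleton --- the dichotomy on $\mathrm{Fix}(a)$, collapsing the second and third relators to a relation $w^{q}=a^{(s+t)q-p}$ (up to the commutation), and playing the positivity of the word $w$ against the sign of $p/q-(s+t)$ --- is the right one, but both of your stages have a genuine gap, and in both the missing or misplaced ingredient is control of the sign of $b$'s displacement. In Stage 1 you claim that relators two and three alone, plus a case analysis on $b$, give a contradiction in every sub-case. They do not: if $a$ is fixed-point-free and increasing while $b$ is decreasing (or has fixed points), the positive word $w$ mixes increasing and decreasing letters and has no determined displacement. Concretely, let $a$ be translation by $1$, let $b$ be translation by $s+t-p/q-1\le -1$, and take $w=ab$; then $L=a^{-s}wa^{-t}$ is translation by $-p/q$, so $MLM^{-1}L^{-1}$ and $M^{p}L^{q}$ both act trivially, there is no global fixed point, and $p/q\ge s+t$ holds. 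So $\mathrm{Fix}(a)\ne\emptyset$ cannot be forced without the \emph{first} relator, which your Stage 1 never invokes. The paper closes exactly this hole: rewriting the first relator as $a^{n}w_{2}b^{r-k}w_{1}a^{m}=w_{2}b^{r}w_{1}$ and using $ax>x$ everywhere yields $b^{k}x'>x'$ for all $x'$ (an immediate contradiction if $k=0$), hence $bx>x$ everywhere; only then does positivity of $w$ give $a^{q((s+t)q-p)}x=w^{q^{2}}x>x$, forcing $q((s+t)q-p)>0$, i.e.\ $p/q<s+t$, the desired contradiction.

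In Stage 2 the ``main obstacle'' you flag is not actually an obstacle, but the extremal-point-plus-first-relator strategy you propose is not the way out (and is left undeveloped). Once $ax_{0}=x_{0}$, suppose $bx_{0}\ne x_{0}$, without loss of generality $bx_{0}>x_{0}$. Since $w$ is a word in the letters $a$ and $b$ only (no inverses) containing at least one $b$, an elementary induction over its letters shows $wx_{0}>x_{0}$: each letter maps $[x_{0},\infty)$ into itself because $ax_{0}=x_{0}$, $bx_{0}>x_{0}$ and both are order-preserving, and any occurrence of $b$ pushes the point strictly above $x_{0}$. No cancellation along the orbit is possible precisely because $w$ contains no inverse letters and $a$ fixes $x_{0}$. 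This contradicts $wx_{0}=x_{0}$ (the paper phrases the same contradiction as $w^{-q^{2}}x_{0}<x_{0}$ against $w^{-q^{2}}=a^{q(p-(s+t)q)}$, which fixes $x_{0}$). Neither an extremal point of $\mathrm{Fix}(a)$ nor the first relator is needed here. In short: your Stage 1 omits the first relator where it is indispensable, and your Stage 2 reaches for it where it is unnecessary, dismissing as impossible the pointwise monotonicity argument that actually finishes the proof.
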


\begin{proof}
We first consider the case where $a$ has a fixed point, say $x$, on $\mathbb{R}$. 
In this case, we can show that $bx=x$ as follows. 
If $b$ could not fix $x$, we can assume $bx>x$ for the $x$ by choosing an order on $\mathbb{R}$. 
It is equivalent to $b^{-1} x<x$ since any action of $G$ preserves an order of $\mathbb{R}$. 
Also $ax=x$, for $x$ as above, is equivalent to $a^{-1} x=x$. 
Moreover, by the second relator $M^pL^q$ of $G$, it follows that
\begin{align*}
 M^pL^q &= a^pL^q \\
&= a^{p-(s+t)(q-1)-t}La^{s+t}La^{s+t} \cdots La^{s+t}La^t \\
&= a^{p-(s+t)(q-1)-t}(a^{-s}wa^{-t})a^{s+t}(a^{-s}wa^{-t})a^{s+t} \cdots (a^{-s}wa^{-t})a^{s+t}(a^{-s}wa^{-t})a^t \\
&= a^{p-(s+t)(q-1)-t-s}w^q \\
&= a^{p-(s+t)q}w^q \\
&=1 , 
\end{align*}
for $a=M$ and $L$ commutes.  
Thus we have the relation $a^{p-(s+t)q} = w^{-q}$, and 
further that $a^{q(p-(s+t)q)} = w^{-q^2}$. 
On the other hand, we obtain $a^{-1} b^{-1} x<x$ by $a^{-1} b^{-1} x<a^{-1} x=x$. Repeating this consideration, we have $w^{-q^2}x<x$, for $q^{2} > 0$ and $w^{-1}$ is a word which contains at least one $b^{-1}$ and excludes $a$ and $b$. 
Therefore, we have 
\begin{align*}
 x &> w^{-q^2} x \\
&= a^{q(p-(s+t)q)}x \\
&= x
\end{align*}
giving a contradiction. Hence we have $bx=x$. 
Now $x$ is fixed by $a$ and $b$, and so $\Phi(G)$ has a global fixed point, since $G$ is generated by $a$ and $b$.

We next consider the case where $a$ does not have any fixed point on $\mathbb{R}$. 
We will show that this case cannot happen. 
In this case, we can suppose that $ax>x$ for any $x\in \mathbb{R}$ by reversing its order if necessary. 
Using the first relator of $G$, we obtain the following: 
\begin{align*}
 & w_1a^mw_1^{-1} b^{-r}w_2^{-1} a^nw_2b^{r-k}=1 \\
& w_1^{-1}b^{-r}w_2^{-1}a^nw_2b^{r-k}w_1a^m=1 \\
& a^nw_2b^{r-k}w_1a^m = w_2b^rw_1
\end{align*}
Then it follows that 
\begin{align*}
 a^nw_2b^{r-k}w_1a^mx 
&= w_2b^rw_1 x \\
&< w_2b^rw_1a^mx \\
&< a^nw_2b^rw_1a^mx
\end{align*}
since we are assuming that $ax>x$ for any $x\in \mathbb{R}$. 
Since any element of $G$ preserves an orientation of $\mathbb{R}$, 
we see that 
$
a^nw_2b^rw_1a^mx>a^nw_2b^{r-k}w_1a^mx$ implies $b^rw_1a^mx>b^{r-k}w_1a^mx$.
Since the element $w_1a^m\in G$ is thought as a homeomorphism of $\mathbb{R}$, there is a point $x\in \mathbb{R}$ which satisfies $x'=w_1a^mx$ for any point $x'\in \mathbb{R}$.
Hence, for the point $x'$, we obtain
\begin{align*}
b^rx' &= b^rw_1a^mx \\
&> b^{r-k}w_1a^mx \\
&= b^{r-k}x' \\
b^kx' &> x'
\end{align*}
Consequently it follows that $bx>x$ for any $x\in \mathbb{R}$ if $k>0$, and 
follows that $k \ne 0$ otherwise $x' > x'$. 

On the other hand, by using the second relator of $G$, 
by similar arguments as above, it must follow that 
$a^{q((s+t)q-p)}x>x$ by $a^{q((s+t)q-p)}x = w^{q^2} x > x$. 
This must imply that $q((s+t)q-p)>0$, that is, $p/q<s+t$, 
but this contradicts the assumption $p/q\ge s+t$. 

This completes the proof of the proposition. 
\end{proof}

\begin{proof}[Proof of Theorem~\ref{thm}.]
Suppose that $\pi_1(K(p/q))$ is left-orderable. 
It is known that a countable group $G$ is left-orderable if and only if $G$ is isomorphic with a subgroup of 
$Homeo^+(\mathbb{R})$.
See \cite[Theorem 2.6]{BRW}.
This implies that there exists an injective homomorphism 
$\Phi: \pi_1(K(p/q)) \to Homeo^+(\mathbb{R})$. 

It was also proved in \cite[Lemma 5.1]{BRW} that if there is a homomorphism $G \to Homeo^+(\mathbb{R})$ with image $\ne \{id\}$, 
then there is another such homomorphism which induces an action on $\mathbb{R}$ without global fixed points. 
Without loss of generality, we may assume that $\Phi$ has no global fixed point. This contradicts the previous proposition. 
Therefore $\pi_1(K(p/q))$ is non left-orderable.
\end{proof}

%%% Added in 2014.9.12
\section{Application}

In this section, we show that the knot groups for certain twisted torus knots admit the presentations satisfying the conditions in our theorem, and give a proof of Corollary \ref{cor}.  

A \textit{positively twisted torus knots} in $S^3$ is defined as the knots obtained from the torus knot by adding positive full twists along an adjacent pair of strands. 
Here we only consider positively $u$-twisted torus knots of type $(3, 3v+2)$, 
that is, the knots obtained from the torus knot of type $(3, 3v+2)$ by adding positive $u$ full twists along an adjacent pair of strands. 
Note that it is known that the positively $u$-twisted $(3,5)$-torus knot is equivalent to the $(-2, 3, 5 + 2u)$-pretzel knot.

\begin{proposition}\label{prp}
Let $K$ be the positively $u$-twisted $(3,3v+2)$-torus knot in $S^3$ with $u,v \ge 0$. 
Then the knot group $\pi_1(S^3-K)$ has a presentation such as 
\[
\langle a, b \ \vert \ (w_1 a^m w_1^{-1}) b^{-r} (w_2^{-1} a^n w_2) b^{r-k} \rangle
\]
with $m=n=1$, $r=u+1$, $k=1$, $w_1 = (ba)^{v+1}$, $w_2=(ba)^v$. 
Furthermore the generator $a$ represents a meridian of $K$ and 
the preferred longitude of $K$ is represented as $a^{-s} w a^{-t}$ with 
$s=2u+3(3v+2)+1$, $t=-1$ and $w = ((ba)^v b^{u+1} )^2 (ba)^v b$. 
\end{proposition}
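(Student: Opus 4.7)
The plan is to realize the positively $u$-twisted $(3, 3v+2)$-torus knot $K$ as the closure of a 3-braid, extract a Wirtinger-style presentation of $\pi_1(S^3-K)$, reduce it to two generators via Tietze transformations, and then read off a meridian-longitude pair directly from the diagram.

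First, I would write $K$ as the closure of the 3-braid $\beta = (\sigma_1\sigma_2)^{3v+2}\sigma_1^{2u}$, where $\sigma_1,\sigma_2$ are the standard Artin generators of $B_3$: the factor $(\sigma_1\sigma_2)^{3v+2}$ is the usual braid word for $T(3,3v+2)$, and $\sigma_1^{2u}$ contributes the $u$ positive full twists along an adjacent pair of strands (each full twist being $\sigma_1^2$).

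Next, starting from the Wirtinger presentation given by this braid projection, and using that the three strands of the $(3,q)$-torus braid cycle among themselves (so that all three strand meridians are conjugate in the knot group), I would reduce to two generators by choosing $a$ as the meridian of the strand entering the twisting region and $b$ as the meridian of an adjacent strand, arranging that $ba$ represents the half-twist element naturally associated with $\sigma_1\sigma_2$. Tracking the Wirtinger relations through the diagram, the torus-braid factor $(\sigma_1\sigma_2)^{3v+2}$ cycles $a$ to a conjugate of the form $(ba)^{v+1} a (ba)^{-(v+1)}$, while the twist factor $\sigma_1^{2u}$ inserts a conjugation by $b^{u+1}$ of a cyclic shift of $a$. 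Equating the two via the braid closure should produce the defining relator $(w_1 a w_1^{-1}) b^{-(u+1)} (w_2^{-1} a w_2) b^{u}$ with $w_1=(ba)^{v+1}$, $w_2=(ba)^v$, matching the stated form with $m=n=1$, $r=u+1$, and $k=1$.

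For the longitude, I would follow a parallel copy of $K$ along the boundary torus of a tubular neighborhood, record the meridians it crosses expressed in the new generators $a,b$, and correct by a power of $a$ to obtain a curve of the correct framing in $S^3-K$. Expressing the resulting loop in terms of $a$ and $b$ via the Tietze reductions above would give a longitude $a^{-s} w a^{-t}$ with the stated $s, t$, and $w=((ba)^v b^{u+1})^2 (ba)^v b$.

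The main obstacle will be the explicit combinatorial bookkeeping of the Tietze reductions, and in particular verifying the exact form of the longitude word $w$ and the precise powers $s = 2u+3(3v+2)+1$ and $t = -1$. A crucial point for the applicability of Theorem~\ref{thm} is that $w$ contains no negative powers of $a$ or $b$; this positivity property should follow from the fact that $\beta$ is a positive braid, so that every meridian encountered along the longitude traversal appears with positive exponent in the two-generator presentation.
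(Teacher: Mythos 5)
Your outline takes a genuinely different route from the paper: the paper does not start from a braid/Wirtinger presentation at all, but quotes the two-generator presentation of Clay and Watson, $\langle x,y \mid x^2(y^{-v}x)^u x y^{-v-1}(y^{-v}x)^{-u}y^{-2v-1}\rangle$, together with their identification of the meridian $y^{v+1}x^{-1}$ and longitude $x^2(y^{-v}x)^u x(y^{-v}x)^u$, and then performs the substitutions $a=y^{v+1}x^{-1}$, $b=ya^{-1}$ followed by a direct word computation. That is purely algebraic bookkeeping with no diagram. Your plan could in principle work, but as written it has a concrete inconsistency that would derail it.

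The problem is your choice of $b$ as ``the meridian of an adjacent strand.'' Abelianize the target relator $(w_1 a w_1^{-1})b^{-(u+1)}(w_2^{-1}aw_2)b^{u}$: it becomes $a^2b^{-1}$, so in $H_1(S^3-K)\cong\mathbb{Z}$ one must have $[b]=2[a]$. Since $a$ is a meridian, $[a]=1$, hence $[b]=2$: the element $b$ cannot be a meridian (indeed, in the paper $b=ya^{-1}$ where $y$ has winding number $3$, so $[b]=2$). With two Wirtinger meridians as generators you can never reach a relator of the stated form, so the claimed outputs of your Tietze reduction (``$(\sigma_1\sigma_2)^{3v+2}$ cycles $a$ to $(ba)^{v+1}a(ba)^{-(v+1)}$,'' ``$\sigma_1^{2u}$ inserts a conjugation by $b^{u+1}$'') cannot be correct for your $b$. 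This also undercuts your positivity argument at the end: the fact that every meridian along the longitude appears positively in a positive braid diagram says nothing directly about positivity of the word $w$ in the letters $a$ and the non-meridional $b$. Finally, the framing correction is not simply the blackboard writhe of your braid (exponent sum $2(3v+2)+2u$); the paper's correction is $a^{-3(3v+2)-2u}$, and that discrepancy would have to be accounted for. None of these are necessarily fatal to a diagrammatic approach, but each requires a fix or an explicit computation that the proposal defers.
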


\begin{proof}
As shown in \cite[Proposition 3.1]{CW}, 
$\pi_1(M-K)$ has the following presentation. 
\[
\langle x, y \ \vert \ x^2 ( y^{-v} x )^u x y^{-v-1} ( y^{-v} x )^{-u} y^{-2v-1} \rangle
\]
Moreover by \cite[Proposition 3.2]{CW}, 
the element $y^{v+1} x^{-1}$ represents a meridian of $K$. 
Thus we set $a = y^{v+1} x^{-1}$. 
Then, since $a = y^{v+1} x^{-1}$ is equivalent to $x = a^{-1} y^{v+1}$, we have 
\begin{align*}
  &  \langle x, y \ \vert \ x^2 ( y^{-v} x )^u x y^{-v-1} ( y^{-v} x)^{-u} y^{-2v-1} \rangle \\
&= \langle a, y \ \vert \ (a^{-1} y^{v+1} )^2 ( y^{-v} a^{-1} y^{v+1} )^u a^{-1} y^{v+1} y^{-v-1} (y^{-v} a^{-1} y^{v+1} )^{-u} y^{-2v-1} \rangle
\end{align*}
Let us transform the relator above as follows. 
\begin{align*}
 & (a^{-1} y^{v+1} )^2 (y^{-v} a^{-1} y^{v+1})^u a^{-1} y^{v+1} y^{-v-1} (y^{-v} a^{-1} y^{v+1})^{-u} y^{-2v-1} \\
&= ( a^{-1} y^{v+1} )( a^{-1} y^{v+1})
(y^{-v} a^{-1} y^{v+1})^u a^{-1} (y^{-v} a^{-1} y^{v+1})^{-u} y^{-2v-1} \\
&= ( a^{-1} y^{v+1} ) ( a^{-1} y^{v+1} )
( y^{-v} a^{-1} y^{v+1} ) 
\cdots (y^{-v} a^{-1} y^{v+1}) 
a^{-1} (y^{-v} a^{-1} y^{v+1})^{-u} y^{-2v-1} \\
&= ( a^{-1} y^{v+1} ) a^{-1} y a^{-1} y \cdots y a^{-1} y^{v+1} a^{-1}
 (y^{-v} a^{-1} y^{v+1} )^{-u} y^{-2v-1} \\
&= a^{-1} y^v ( y a^{-1} )^{u+1} y^{v+1} a^{-1} ( y^{-v} a^{-1} y^{v+1} )^{-u} y^{-2v-1} \\
&= a^{-1} y^v ( y a^{-1} )^{u+1} y^{v+1} a^{-1} ( y^{-v-1} a y^v )^u y^{-2v-1} \\
&= a^{-1} y^v ( y a^{-1})^{u+1} y^{v+1} a^{-1} ( y^{-v-1} a y^v) 
\cdots ( y^{-v-1} a y^v) y^{-2v-1} \\
&= a^{-1} y^v ( y a^{-1} )^{u+1} y^{v+1} a^{-1} y^{-v-1} a y^{-1} a y^{-1} \cdots a y^{-1} y^{-v} \\
&= a^{-1} y^v ( y a^{-1} )^{u+1} y^{v+1} a^{-1} y^{-v-1} ( a y^{-1} )^u y^{-v}\\
&= a^{-1} y^v ( y a^{-1} )^{u+1} y^{v+1} a^{-1} y^{-v-1} ( y a^{-1} )^{-u} y^{-v}
\end{align*}
Further, performing a cyclic permutation, we have the following. 
$$  ( y a^{-1})^{-u} y^{-v} a^{-1} y^v ( y a^{-1} )^{u+1} y^{v+1} a^{-1} y^{-v-1} $$
Now we set $b = y a^{-1}$. Substituting $y = b a$, we have the following. 
\[
b^{-u} ( b a )^{-v} a^{-1} ( b a )^v b^{u+1} ( b a )^{v+1} a^{-1} ( b a )^{-v-1} 
\]
Finally, taking the inverse of the above, we have the following relator. 
\[
( b a )^{v+1} a ( b a )^{-v-1} b^{-u-1} ( b a )^{-v} a ( b a )^v b^u
\]
Consequently the knot group $\pi_1(S^3-K)$ has a presentation such as 
\[
\langle a, b \ \vert \ (w_1 a^m w_1^{-1}) b^{-r} (w_2^{-1} a^n w_2) b^{r-k} \rangle
\]
with $m=n=1$, $r=u+1$, $k=1$, $w_1 = (ba)^{v+1}$, $w_2=(ba)^v$.

Also, by \cite[Proposition 3.2]{CW}, a longitude of $K$ is represented by 
$x^2 ( y^{-v} x )^u x ( y^{-v} x)^u $ in terms of the generators $x$ and $y$. 
Then, by substituting $x = a^{-1} y^{v+1} $, it is transformed as follows. 
\begin{align*}
 & x^2 ( y^{-v} x )^u x (y^{-v} x)^u \\
&= ( a^{-1} y^{v+1} )^2 (y^{-v} a^{-1} y^{v+1} )^u a^{-1} y^{v+1} (y^{-v} a^{-1} y^{v+1})^u \\
&= ( a^{-1} y^{v+1} ) ( a^{-1} y^{v+1} ) ( y^{-v} a^{-1} y^{v+1} )
\cdots ( y^{-v} a^{-1} y^{v+1} ) 
a^{-1} y^{v+1} ( y^{-v} a^{-1} y^{v+1})
\cdots ( y^{-v} a^{-1} y^{v+1}) \\
&= ( a^{-1} y^v ) ( y a^{-1})^{u+1} y^v ( y a^{-1} )^{u+1} y^{v+1}
\end{align*}
Again, by substituting $y = b a$, we obtain the following. 
\[
%( a^{-1} y^v) ( y a^{-1} )^{u+1} y^v ( y a^{-1} )^{u+1} y^{v+1} =  
a^{-1} ( b a )^v b^{u+1} ( b a )^v b^{u+1} ( b a )^{v+1}
\]
Finally, as noted in the paragraph just below \cite[Proposition 3.2]{CW}, 
the presentation of the preferred longitude of $K$ is obtained 
from the above by adding $a^{-3(3v+2)-2u}$ as follows. 
\[
a^{-3(3v+2)-2u-1} ( ( ( b a )^v b^{u+1} )^2 ( b a )^v b ) a 
\]
Therefore the preferred longitude of $K$ is represented as $a^{-s} w a^{-t}$ with 
$s=2u+3(3v+2)+1$, $t=-1$ and $w = ((ba)^v b^{u+1} )^2 (ba)^v b$. 

This completes the proof of Proposition \ref{prp}. 
\end{proof}

Now Corollary \ref{cor} follows from Proposition \ref{prp} and Theorem \ref{thm}  immediately. 

\section*{Acknowledgements}
The authors would like to thank Kimihiko Motegi for useful discussions. 
They also thank Anh Tran and Yasuharu Nakae for their valuable comments, and the anonymous referee for his/her careful readings for the the earlier version of this paper. 
The first author is partially supported by JSPS KAKENHI Grant Number 26400100.

\end{document}